\title{A note on the effective listing of complete types}
\date{\today}
\author
{Anand Pillay\thanks{Supported by NSF grants  DMS 1665035 and DMS-1760413}\\University of Notre Dame  }
\newtheorem{Theorem}{Theorem}[section]
\newtheorem{Definition}[Theorem]{Definition}
\newtheorem{Remark}[Theorem]{Remark}
\newtheorem{Lemma}[Theorem]{Lemma}
\newtheorem{Fact}[Theorem]{Fact}
\newcommand{\Q}{\mathbb Q}
\begin{document}
\maketitle

\begin{abstract} 
We use the ``geometric axioms" point of view to give an effective listing of the complete types of the theory $DCF_{0}$ of differentially closed fields of characteristic $0$. This gives another account of observations made in  \cite{Calvert} and \cite{Marker}.  

\end{abstract}

\section{Introduction}
We describe complete types in $DCF_{0}$ in a manner  which immediately yields an effective listing.   What we do here  should be considered folklore.  It is closely related to an unpublished account by Hrushovski of the existence of a model companion for $DCF_{0}$ equipped with an automorphism  (of which I could find no hard record).  A version of Hrushovski's work appears in \cite{Bustamante}, and was  later generalized to $DCF_{m,0}$, the model companion of the theory of fields of characteristic $0$ equipped with $m$ commuting derivations in \cite{Leon-Sanchez}.

The existence of an effective list of complete types of a theory $T$  is related in \cite{Calvert} to proofs of  {\em strong jump inversion}, a notion from recursive model theory.  Moreover in the same paper the authors give an elementary account of such an effective listing of complete $n$-types  types in the case of $DCF_{0}$, using the Blum axioms and induction on $n$.   In a correspondence with Knight, Marker \cite{Marker} gives another account of the effective listing of types of $DCF_{0}$ making use of the $ACC$ on radical differential ideals, and a finite injury argument.  Effective quantifier elimination for $DCF_{0}$ is of course in the background and allows one to focus on quantifier-free types in the above accounts.  There are also approaches involving computing the prime decomposition of a radical differential idea. 

What we describe here is, at least superficially, a bit more direct. (It is basically writing down details of what I said to Julia Knight in the summer of 2018 after she asked me about the matter.)
\newline
(i) We define the notion of a good pair $(V,W)$ of irreducible algebraic varieties over $\Q$ (and the good pairs form a recursive set). 
\newline
(ii) We define the notion of the $\partial$-generic type of such a good pair $(V,W)$, a complete type over $\emptyset$ of $DCF_{0}$, which is computable (uniformly) from $(V,W)$. 
\newline
(iii) Every complete type $p({\bar x})$ over $\emptyset$ (of $DCF_{0}$)  is the $\partial$-generic type of some good pair $(V,W)$ (after possibly replacing the tuple ${\bar x}$ of variables by $({\bar x},{\bar x}',..,{\bar x}^{(r)})$ for some $r$),

\vspace{5mm}
\noindent
This note is  expository, and is written partly for logicians with an interest in recursive model theory, so we give a few more details than would normally appear in a model theory paper.
We first give a little background on the theory $DCF_{0}$.  See \cite{Marker-book}, \cite{Pierce-Pillay}, \cite{Calvert} for some more details.  $L^{-}$ will denote the language of unitary rings, containing $0,1,+,-,\times$. The language $L$ of differential rings is  $L^{-}\cup\{\partial\}$, where the unary function symbol stands for the derivation.  The theory of integral domains equipped with a derivation is universally axiomatized in $L$, and it turns that that this universal theory has a model companion  which is denoted $DCF_{0}$.   $DCF_{0}$ is complete with quantifier elimination.
There are various recursive sets of axioms for $DCF_{0}$, which  yield that $DCF_{0}$ is decidable and that  quantifier elimination is effective.  One of the nice axiom systems, due to Lenore Blum, is only about differential polynomials $P(y)$ in one differential indeterminate $y$, and says about a differential field $K$ that whenever $P(y), Q(y)$ are differential polynomials over $K$ and the order of $Q$ is strictly less than the order of $P$ then the system $P(y) = 0$ and $Q(y)\neq 0$ has a solution in $K$ (so vacuously if $P(y)$ has order $0$, namely is an ordinary polynomial over $K$, then $P$ has a solution in $K$).

The so-called geometric axioms for $DCF_{0}$ were introduced in \cite{Pierce-Pillay} as something analogous to the Hrushovski geometric axioms for the model companion $ACFA$ of difference fields. Compared to the Blum axioms they are a bit more sophisticated, but the advantage is that they express succinctly fairly powerful facts about differentially closed fields (and moreover the Blum axioms are a special case). 
Also key aspects of differential algebra and the model theory of differential fields concern reducing or expressing differential algebraic properties to, or, in terms of,  algebraic properties, and the geometric axioms have a similar feature. 

We work in characteristic $0$. We now recall the naive account of elementary {\em algebraic-geometric} notions which suffices for our purposes.  Let $k<K$ be fields with $K$ algebraically closed (where possibly $k = K$).  A variety $V\subseteq K^{n}$ defined over $k$ is simply the zero set of a finite collection of polynomials $P(x_{1},..,x_{n})\in k[x_{1},..,x_{n}]$  with coefficients in $k$. The {\em Zariski topology} on $V$ is the topolology whose closed sets are subvarieties of $V$ (possibly defined over $K$). $V$ (assumed to be defined over $k$) is called $k$-irreducible if it $V$ can not be expressed as the union of two proper closed subsets, both defined over $k$. By quantifier elimination, there is a one-one correspondence between complete $n$-types over $k$ in the sense of the theory of algebraically closed fields, and varieties $V\subseteq K^{n}$ which are defined over $k$ and $k$-irreducible.  For example to such a $V$ attach the type $p_{V}$ which says that ${\bar x}\in V$ and ${\bar x}\notin W$ for each proper closed subvariety $W$ ov $V$ defined over $k$.  $p_{V}$ is called the ``generic type" of $V$ over $k$. The collection of $k$-irreducible varieties over $k$ is recursive (assuming $k$ to be a recursive field). 

Let $V\subseteq K^{n}$ be defined over $k$ and $k$-irreducible.  The (Zariski) tangent bundle $TV\subseteq K^{2n}$ is by definition defined by equations 
$\sum_{i=1,..,n}(\partial P/\partial x_{i})({\bar x}(u_{i}) = 0$ as $P$ ranges over polynomials which generate the ideal $I(V/k)$ of polynomials over $k$ vanishing on $V$.  
If $k$ is equipped with a derivation $\partial$ then we can shift $TV$ by the derivation (in a sense) and define $T_{\partial}(V)$  (also called $\tau(V)$, the first prolongation of $V$) to be the subset of $K^{2n}$ defined by the equations  $\sum_{i=1,..,n}(\partial P/\partial x_{i})({\bar x}(u_{i})+ P^{\partial}({\bar x}) = 0$  where $P$ is as before and $P^{\partial}$ is obtained from $P$ by applying the distinguished derivation $\partial$ on $k$ to the coefficients.  So note that when $\partial = 0$ on $k$, then $T_{\partial}(V)$ coincides with $TV$.  All the data defined so far belong to algebraic (rather than differential algebraic) geometry.

\vspace{2mm}
\noindent
The axioms for $DCF_{0}$ proposed in \cite{Pierce-Pillay} say (about a differential field $(k,\partial)$):
\newline
(i) $k$ is algebraically closed  (so in our above algebraic-geometric discussion we may take $k = K$), and 
\newline
(ii) If $V\subseteq k^{n}$ is an irreducible variety, and $W$ is an irreducible subvarieity of $T_{\partial}(V)$ such that the projection of $W$ onto the $\bar x$ coordinates  contains a nonempty Zariski open subset of $V$ (we say that $W$ projects generically onto $V$) and $U$ is a nonemptyset Zariski open subset of $W$ then there is some point $({\bar a},{\bar b})\in U$ such that ${\bar b} = \partial({\bar a})$. 

\vspace{2mm}
\noindent
Here and subsequently  if $\bar a = (a_{1},..,a_{n})$ is a finite tuple of elements from a differential field $(k,\partial)$ then $\partial({\bar a})$ denotes the tuple $(\partial(a_{1}),..,\partial(a_{n}))$. We may also write this as ${\bar a}'$. Likewise ${\bar a}'' = \partial^{2}({\bar a})$ etc. 

A corollary of the axioms is:
\begin{Fact} Let $(K,\partial)$ be a differentially closed field, and $k$ an arbitrary differential subfield (not necessarily algebraically closed). Let $V\subset K^{n}$ be a variety defined over $k$ and irreducible over $k$. Let $W$ be a $k$-irreducible subvariety of $T_{\partial}(V)$ defined over $k$ which projects generically on to $V$. 
\newline
Then
\newline
(i) Let $U$ be a nonempy Zariski open subset of $W$ defined over $k$. Then there is $(a,b)\in K^{2n}$ such that $(a,b)\in U$ and $\partial(a) = b$. 
\newline
(ii) Suppose moreover that $(K,\partial)$ is $|k|^{+}$-saturated. Then there is some $(a,b)\in K^{2n}$ such that $\partial(a) = b$ and $(a,b)$ is a generic point over $k$ of $W$ (in the sense of $ACF$). 
\end{Fact}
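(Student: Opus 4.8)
\medskip
\noindent\textbf{Proof strategy.} The plan is to deduce part (i) from the second geometric axiom for $DCF_{0}$ by reducing to the case in which $V$ is \emph{absolutely} irreducible, and then to obtain part (ii) from part (i) by a routine saturation argument. Only part (i) needs a geometric input: granting part (i), the partial $L$-type over $k$ consisting of $p_{W}(\bar x,\bar y)$ (the generic type of $W$ over $k$ in the sense of $ACF$) together with the formulas $\partial(x_{i})=y_{i}$ is finitely satisfiable in $(K,\partial)$, since any finite subset of it is implied by ``$(\bar x,\bar y)\in W\setminus W'$ and $\partial(\bar x)=\bar y$'' for a suitable proper closed subvariety $W'$ of $W$ defined over $k$, and because $W$ is $k$-irreducible the set $U:=W\setminus W'$ is then a nonempty Zariski open subset of $W$ defined over $k$, to which part (i) applies; hence, when $(K,\partial)$ is $|k|^{+}$-saturated, the type is realized by some $(a,b)\in K^{2n}$, which is a generic point of $W$ over $k$ with $\partial(a)=b$, giving part (ii).

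So the content is in part (i). Let $\pi$ denote the projection onto the $\bar x$-coordinates. Decompose $V=V_{1}\cup\dots\cup V_{m}$ and $W=W_{1}\cup\dots\cup W_{l}$ into their $K$-irreducible (equivalently, since $\operatorname{char}=0$ and $K$ is algebraically closed, absolutely irreducible) components; these are defined over $k^{\mathrm{alg}}$, and as $V$ and $W$ are $k$-irreducible, $\operatorname{Gal}(k^{\mathrm{alg}}/k)$ permutes $\{V_{i}\}$ transitively and $\{W_{j}\}$ transitively. Since $W$ projects generically onto $V$, the closure of $\pi(W)$ is all of $V$, so $\bigcup_{j}\overline{\pi(W_{j})}=V$; the $\overline{\pi(W_{j})}$ are conjugate irreducible subvarieties of $V$, hence of a common dimension, which must equal $\dim V$ because their union is $V$, so each $\overline{\pi(W_{j})}$ is a component $V_{\sigma(j)}$ and $\pi\colon W_{j}\to V_{\sigma(j)}$ is dominant. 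Finally, $W\setminus U$ is a closed subset of $W$ defined over $k$, hence $\operatorname{Gal}(k^{\mathrm{alg}}/k)$-invariant; it cannot contain every $W_{j}$ (else $U=\emptyset$), so fix $W_{1}$ with $U\cap W_{1}\neq\emptyset$ and put $V_{1}:=V_{\sigma(1)}$.

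It remains to check that $W_{1}\subseteq T_{\partial}(V_{1})$ --- \emph{this is the one non-formal point}. Write $(\bar c,\bar d)$ for the generic point of $W_{1}$ over $k^{\mathrm{alg}}$; its projection $\bar c$ is the generic point of $V_{1}$ over $k^{\mathrm{alg}}$, and since distinct components of $V$ meet in proper subvarieties, $\bar c$ lies on no $V_{i}$ with $i\neq 1$, is a smooth point of $V$, and --- using that the $V_{i}$ are $\operatorname{Gal}(k^{\mathrm{alg}}/k)$-conjugate --- satisfies $I(\bar c/k)=I(V/k)$, so $k[\bar c]=k[\bar x]/I(V/k)$. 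The defining equations of $T_{\partial}(V)$ say exactly that $(\bar c,\bar d)\in T_{\partial}(V)$ iff the assignment $P(\bar c)\mapsto\sum_{i}(\partial P/\partial x_{i})(\bar c)\,d_{i}+P^{\partial}(\bar c)$ is a well-defined derivation $D\colon k[\bar c]\to K$ with $D|_{k}=\partial$ and $D(c_{i})=d_{i}$; since $\operatorname{char}=0$, $D$ extends to a derivation on $k^{\mathrm{alg}}(\bar c)$, and restricting that extension to $k^{\mathrm{alg}}[V_{1}]=k^{\mathrm{alg}}[\bar x]/I(V_{1}/k^{\mathrm{alg}})$ witnesses $(\bar c,\bar d)\in T_{\partial}(V_{1})$; hence $W_{1}\subseteq T_{\partial}(V_{1})$. (One should also record the standard fact that $T_{\partial}(V)$ does not depend on the chosen generators of $I(V/k)$.) Now $V_{1}$ is an irreducible variety over the differentially closed field $K$, $W_{1}$ an irreducible subvariety of $T_{\partial}(V_{1})$ projecting generically onto $V_{1}$, and $U\cap W_{1}$ a nonempty Zariski open subset of $W_{1}$, so the second geometric axiom yields $(\bar a,\bar b)\in U\cap W_{1}\subseteq U$ with $\bar b=\partial(\bar a)$, which proves part (i).

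I expect the verification $W_{1}\subseteq T_{\partial}(V_{1})$ to be the only real obstacle: it is the one step that genuinely uses characteristic $0$ (through the unique extension of derivations along algebraic extensions), and the place where one must be careful that the prolongation of a $k$-irreducible variety restricts correctly to the prolongations of its absolutely irreducible components. The Galois bookkeeping in the reduction and the saturation argument for part (ii) are routine.
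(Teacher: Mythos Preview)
Your argument is correct. The reduction of part~(ii) to part~(i) via saturation is the standard one, and for part~(i) your decomposition into absolutely irreducible components over $k^{\mathrm{alg}}$, together with the Galois transitivity argument, cleanly reduces to the form of the geometric axiom stated over the algebraically closed field $K$ itself. The step you flagged as ``the one non-formal point'' --- that $W_{1}\subseteq T_{\partial}(V_{1})$ --- is handled correctly: the key is that $I(\bar c/k)=I(V/k)$ (which you justify via Galois conjugacy of the components), so membership of $(\bar c,\bar d)$ in $T_{\partial}(V)$ really does give a derivation on $k[\bar c]$ extending $\partial|_{k}$, and the unique extension of derivations along algebraic extensions in characteristic~$0$ forces its extension to $k^{\mathrm{alg}}[\bar c]$ to restrict to $\partial|_{k^{\mathrm{alg}}}$, yielding $(\bar c,\bar d)\in T_{\partial}(V_{1})$.

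As for comparison: the paper does not actually supply a proof of this Fact. It is stated as ``a corollary of the axioms'' and left at that, so there is no argument in the paper to compare yours against. Your write-up is exactly the kind of verification one would expect to fill that gap, and indeed the passage from $k$-irreducible data to absolutely irreducible data over $k^{\mathrm{alg}}$ (so that the Pierce--Pillay axiom, which is phrased over an algebraically closed base, applies directly) is the natural route.
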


\vspace{5mm}
\noindent

 \section{Results.}

We will fix a saturated differentially closed field ${\cal U}$.   and a small differential subfield $k$, which may at one extreme  be $\Q$, and at another extreme be an elementary submodel. 

Let us first remark that by quantifier elimination in $DCF_{0}$ the complete $L$-type of a tuple ${\bar a}$ over $k$ is determined by  the information of which quantifier-free $L^{-}$ formulas with parameters from $k$ are satisfied by $({\bar a},{\bar a}')$, $({\bar a}, {\bar a}', {\bar a}'')$, etc. 

Following the strategy mentioned in the introduction we first give the notion of a good pair over $k$.  But for the purposes of listing complete types of $DCF_{0}$ only the case $k=\Q$ will be relevant relevant. This notion of a good pair is a purely algebraic-geometric notion, modulo the ground field $k$ supporting a possibly nontrivial derivation.

\begin{Definition} By a good pair $(V,W)$ over $k$ we mean a pair of $k$-irreducible affine varieties $V,W$ with $V\subseteq {\cal U}^{n}$ and $W\subset {\cal U}^{2n}$ for some $n$, such that
\newline
(i) $W\subseteq T_{\partial}(V)$,
\newline
(ii) $W$ projects generically on to $V$, and
\newline
(iii) For ${\bar a}$ generic  in $V$ over $k$, the fibre $W_{\bar a}$ is an {\em affine subspace} of $T_{\partial}(V)_{\bar a}$, namely is defined by a finite system of linear possibly inhomogeneous equations. 
\end{Definition}

\begin{Remark} Assume $(V,W)$ is a good pair over $k$. Let $({\bar a},{\bar b})$ be a generic point of $W$ over $k$.  Suppose $(b_{1},..,b_{m})$ is a transcendence basis for ${\bar b}$ over $k({\bar a})$. Then for each $i=1,..,n$, $b_{i}$ is of the form $s_{0}({\bar a}) + s_{1}({\bar a}) b_{1} + ... + s_{m}({\bar a})b_{m}$, where the $s_{j}({\bar a})$ are in $k({\bar a})$  (so as the notation suggests each $s_{j}$ is a $k$-rational function defined at ${\bar a}$). 
\end{Remark}

\begin{Lemma} Suppose that $(V,W)$ is a good pair over $k$, where $V\subseteq {\cal U}^{n}$.  Let ${\bar a}, {\bar b}$ and $m\leq n$ be as in Remark 2.2. Then there is a complete type $p({\bar x})$ over $k$ (in $DCF_{0}$)  which is axiomatized by:
\newline
(i)  $({\bar x}, {\bar x}')$ is a generic point over $k$ of $W$ (in the sense of $ACF$,
\newline
(ii) $(x_{1}',..,x_{m}',x_{1}'',..., x_{m}'',....,x_{1}^{(r)},...,x_{m}^{(r)},...)$ is algebraically independent over $k({\bar x})$. 
\newline
This complete type $p({\bar x})$ over $k$ (in $DCF_{0}$)  is determined uniquely by $(V,W)$ and we call it the ${\partial}$-generic type of $(V,W)$ over $k$. 
\end{Lemma}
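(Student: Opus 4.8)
The plan is to derive both uniqueness and existence from quantifier elimination in $DCF_{0}$ together with the algebraic-geometric rigidity forced by conditions (i) and (iii) of a good pair; write $d=\dim V$, so that $\dim W=d+m$.

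\emph{Uniqueness (and completeness).} By the remark at the start of this section, a complete $L$-type over $k$ is determined by the $ACF$-loci over $k$ of the tuples $(\bar a),(\bar a,\bar a'),(\bar a,\bar a',\bar a''),\dots$, so it suffices to show (i) and (ii) pin down all of these. Condition (i) fixes the locus of $(\bar x,\bar x')$ to be $W$, hence that of $\bar x$ to be $V$. For the higher loci I would differentiate the relations of Remark 2.2: from $x_i'=s_{0}(\bar x)+\sum_{j\le m}s_{j}(\bar x)x_j'$ (valid on $W$ for $i>m$), induction on $r$ gives, for all $r\ge 1$ and $i>m$, an element $c_{i,r}\in k(\bar x,\dots,\bar x^{(r-1)})$ with $x_i^{(r)}=\sum_{j\le m}s_{j}(\bar x)x_j^{(r)}+c_{i,r}$, using that $\partial$ applied to $s_{j}(\bar x)$ lands in $k(\bar x,\bar x')$. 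Hence $k(\bar a,\dots,\bar a^{(r)})$ is generated over $k(\bar a,\bar a')$ by the $(r-1)m$ elements $a_j^{(s)}$ ($j\le m$, $2\le s\le r$), which by (ii) are algebraically independent there; combined with $\mathrm{tr.deg}(k(\bar a,\bar a')/k)=\dim W$ from (i), the locus of $(\bar a,\dots,\bar a^{(r)})$ is a $k$-irreducible variety of dimension exactly $d+rm$. It is contained in the variety $\widetilde V_r$ cut out by the equations of $W$ in $(\bar x,\bar x')$ together with the differentiated relations above; since $\widetilde V_r$ is $k$-irreducible of dimension at most $d+rm$ (each new level of derivatives adds at most $m$), the locus equals $\widetilde V_r$. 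As $\widetilde V_r$ depends only on $(V,W)$ — equivalently it is $T_{\partial}$ of $\widetilde V_{r-1}$ intersected with the evident ``diagonal'' and reindexed — the type is determined. (The same transcendence-degree count shows (i) together with (ii) is independent of which $m$ coordinates of $\bar b$ are chosen as transcendence basis, so the type really is attached to $(V,W)$.)

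\emph{Existence.} By quantifier elimination and the fact that $DCF_{0}$ is the model companion of the universally axiomatized theory of differential fields, it is enough to produce \emph{some} differential field $(K,\partial)\supseteq k$ and $\bar a\in K^{n}$ satisfying (i) and (ii): then $(K,\partial)$ embeds in a model of $DCF_{0}$, and $\mathrm{tp}_{DCF_{0}}(\bar a/k)$, which contains (i) and (ii) as partial types over $k$, is realised in ${\cal U}$ by saturation. To build $K$ I would use the tower $\widetilde V_{0}=V$, $\widetilde V_{1}=W$, $\widetilde V_{r+1}=\bigl(T_{\partial}(\widetilde V_r)\cap\{\text{diagonal}\}\bigr)$ (reindexed) from above; one checks by induction — using condition (iii) and the fact that the $\partial$-prolongation of an affine-linear system is again affine-linear — that each $\widetilde V_{r+1}$ is $k$-irreducible and maps dominantly onto $\widetilde V_r$ with generic fibre an affine subspace of dimension $m$. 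Then take $\bar a=\bar a^{(0)},\bar a^{(1)},\dots$ a compatible system of generic points of the $\widetilde V_r$ and define $\partial$ on $k(\bar a^{(0)},\bar a^{(1)},\dots)$ by $\partial a_i^{(r)}:=a_i^{(r+1)}$; this is a well-defined derivation precisely because $\widetilde V_{r+1}\subseteq T_{\partial}(\widetilde V_r)$ (up to the reindexing) for every $r$. Condition (i) holds since $\widetilde V_{1}=W$, and (ii) holds because the dimension count gives $\mathrm{tr.deg}\bigl(k(\bar a,\dots,\bar a^{(r)})/k(\bar a,\dots,\bar a^{(r-1)})\bigr)=m$ while the coordinates $a_i^{(r)}$ with $i>m$ already lie in $k(\bar a,\dots,\bar a^{(r-1)})(a_1^{(r)},\dots,a_m^{(r)})$.

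\emph{Main obstacle.} The one non-formal step is the inductive claim in the existence argument that condition (iii) \emph{propagates} up the tower: that the generic fibre of $\widetilde V_{r+1}\to\widetilde V_r$ is again an affine subspace of dimension $m$. Unwound, this says that after fixing a generic $(\bar a,\dots,\bar a^{(r)})$ one may choose $\partial a_1^{(r)},\dots,\partial a_m^{(r)}$ entirely freely, the remaining $\partial a_i^{(r)}$ then being forced by differentiating the affine-linear relations cutting out the previous fibre, with no hidden constraint on the free choices appearing \emph{because} those relations are affine-linear (and the dominance of $\widetilde V_{r+1}\to\widetilde V_r$ reflects the fact that derivations extend over a field of characteristic $0$). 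Were the fibres of a good pair allowed to be arbitrary varieties, differentiating their defining equations could impose new conditions on the next batch of derivatives and both the dimension bookkeeping and the ``free choice'' would fail; condition (iii) exists precisely to prevent this. Everything else is routine manipulation of loci, transcendence degrees, and quantifier elimination.
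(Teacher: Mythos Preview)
Your uniqueness argument is essentially the paper's: both differentiate the affine-linear relations of Remark~2.2 to express each $a_i^{(r)}$ ($i>m$) as a fixed $k(\bar a,\bar a')$-rational combination of the lower data and of $a_1^{(r)},\dots,a_m^{(r)}$, and then invoke (ii) to pin down the remaining quantifier-free $L^{-}$-type. Your existence argument, however, takes a genuinely different route. The paper works \emph{inside} the saturated model and applies Fact~1.1 iteratively: it takes a generic $(\bar a,\bar b)$ of $W$, uses Lemma~1.6 of \cite{Pierce-Pillay} to see that the fibre map $T_\partial(W)_{(\bar a,\bar b)}\to T_\partial(V)_{\bar a}$ is surjective with $m$-dimensional fibres, picks $\bar c$ generic in the fibre over $\bar b$, lets $Z$ be the locus of $(\bar a,\bar b,\bar b,\bar c)$, and then invokes Fact~1.1 for the pair $(W,Z)$ to produce $(\bar d,\bar d')$ generic in $Z$---so that without loss $\bar d=(\bar a,\bar a')$ and $\bar d'=(\bar a',\bar a'')$ with the required independence. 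You instead build the tower $\widetilde V_r$ purely algebraically, take a compatible system of generic points, \emph{define} the derivation on the resulting function field, and only then embed into a model of $DCF_0$ via the model-companion property. Your approach is more self-contained (it never calls on the geometric axioms or \cite{Pierce-Pillay} beyond the existence of the model companion) and makes the tower explicit; the paper's approach is shorter, stays inside $\mathcal U$ throughout, and is the one that showcases the geometric axioms---which is, after all, the declared point of the note. Both hinge on the same ``propagation of affine-linearity'' that you correctly isolate as the one non-formal step; the paper packages this step as the torsor/surjectivity statement for $T_\partial(W)\to T_\partial(V)$.
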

\begin{proof} Suppose first that the set of formulas in (i), (ii) is consistent, so realized by ${\bar a}$ say. Write ${\bar b} = {\bar a}'$. So $({\bar a}, {\bar b})$ is as in Remark 2.2.
Then the quantifier-free $L^{-}$-type of $({\bar a}, {\bar b})$ over $k$ is determined. 
By assumption $(b_{1}',...,b_{m}')$ is algebraically independent over $k({\bar a}, {\bar b})$   so its quantifier-free $L^{-}$ type over $k({\bar a}, {\bar b})$ is determined. By Remark 2.2, for $  i = m+1,..,n$,  $b_{i}' = s_{0}({\bar a})' + s_{1}({\bar a})'b_{1} + .. s_{m}({\bar a})' b_{m} + s_{1}({\bar a})b_{1}'   + ... + s_{m}({\bar a})b_{m}'$.  
As the $s_{j}({\bar a})'$ are in $k({\bar a},{\bar b})$, it follows that the quantifier-free $L^{-}$-type of $(b_{1}',.....,b_{n}')$ over $k({\bar a},{\bar b})$ is determined.  This extends in the obvious way to the quantifier-free $L^{-}$-type of $(b_{1}'',..,b_{m}'')$ over $k({\bar a}, {\bar b}, {\bar b}')$ etc,  showig the uniqueness of the type axiomatized by (i) and (ii) (assuming its consistency). 

\vspace{2mm}
\noindent
Let us now show the consistency of the formulas in (i) and (ii).  First note that directly from Fact 1.1, (i) is consistent. Moreover if $({\bar a}, {\bar a}')$ is a generic point of $W$ over $k$, then from our assumptions, $a_{1}',..,a_{m}'$ are algebraically independent over $k({\bar a})$. 
\newline
We restrict ourselves to showing just that                                                                                                                                                                                                                                                       \newline
(*)   (i) together with ``$x_{1}',..,x_{m}', x_{1}'', ..., x_{m}''$ is  algebraically independent over $k({\bar x})$" is consistent. 

Generalizing to higher derivatives of the $x_{1},..,x_{m}$ follows in a similar fashion. 
Let us start by fixing $({\bar a}, {\bar b})$ generic over $k$ in $W$. Consider $T_{\partial}(W)$, a subvariety of ${\cal U}^{4n}$ in variables ${\bar x}, {\bar y}, {\bar u}$ and ${\bar v}$. Consider the fibre $T_{\partial}(W)_{({\bar a},{\bar b})}$ over $({\bar a}, {\bar b})$.  The (dominant) projection from $W$ to $V$ induces a map $\pi:T_{\partial}(W)_{({\bar a},{\bar b})} \to T_{\partial}(V)_{\bar a}$, taking $({\bar u}, {\bar v})$ to ${\bar u}$.  The map is a ``torsor" for the linear map $T(W)_{\bar a, \bar b} \to T(V)_{\bar a}$, so each fibre has the same dimension, which is clearly $m$.  By Lemma 1.6 of \cite{Pierce-Pillay}, $\pi$ is surjective, in particular ${\bar b}$ is in the image of $\pi$.  Choose ${\bar c}$ generic (over all the data) in the fibre above ${\bar b}$. By what we have just said $tr.deg({\bar c}/k({\bar a},{\bar b})) = m$, whereby clearly $c_{1},..,c_{m}$ are algebraically independent over 
$k({\bar a}, {\bar b})$.  Let $Z$ be the ($k$-irreducible) variety over $k$ of which $({\bar a}, {\bar b}, {\bar b}, {\bar c})$ is the $k$-generic point.  Hence $Z$ is a subvariety of $T_{\partial}(W)$ which projects generically onto $W$.  By Fact 1.1, there is a $k$-generic point of $Z$ of the form $({\bar d}, {\bar d}')$.  Now the $L^{-}$-type of $({\bar d}, {\bar d}')$ equals the $L^{-}$-type of $({\bar a}, {\bar b}, {\bar b}, {\bar c})$
So, without loss of generality ${\bar d} = ({\bar a}, {\bar b})$ and ${\bar d}' = ({\bar b}, {\bar b}')$.  So ${\bar b} = {\bar a}'$ and ${\bar d}' = {\bar a}''$.   Now $({\bar a}, {\bar a}')$ is a $k$-generic point of $W$, whereby $a_{1}',..,a_{m}'$ is algebraically independent over $k({\bar a})$. And we have just seen that $a_{1}'',..,a_{m}''$ is algebraically independent over $k({\bar a}, {\bar a}')$.  Hence we have consistency of the expression in (*), as required. 
\end{proof}

\begin{Lemma} For any complete type $p({\bar x}) = tp({\bar a}/k)$, there is $r\geq 0$ such that $p^{(r)} = tp(({\bar a},{\bar a}',...,{\bar a}')/k)$ is the $\partial$-generic type of some good pair $(V,W)$ over $k$. 
\end{Lemma}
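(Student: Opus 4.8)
The plan is to reduce the statement to a classical stabilization phenomenon in differential algebra and then read off the good pair. Write $\bar a^{(j)}=(a^{(j)}_1,\dots,a^{(j)}_n)$, $K_{-1}=k$, $K_j=k(\bar a,\bar a',\dots,\bar a^{(j)})$ for $j\ge 0$, and $e_j=\mathrm{tr.deg}(K_j/K_{j-1})$. The first step is the following differential-algebra lemma: the sequence $(e_j)$ is non-increasing (hence eventually constant, say $e_j=m$ for $j\ge r_0$), and, crucially, a single set of coordinates can serve as an algebraically independent core at every high level: if $S\subseteq\{1,\dots,n\}$ is such that $\{a^{(j)}_i:i\in S\}$ is a transcendence basis of $\bar a^{(j)}$ over $K_{j-1}$ and $e_{j+1}=e_j$, then $\{a^{(j+1)}_i:i\in S\}$ is a transcendence basis of $\bar a^{(j+1)}$ over $K_j$. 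The proof is the standard one: for $i\notin S$ the element $a^{(j)}_i$ is algebraic over $F:=K_{j-1}(a^{(j)}_l:l\in S)$, and differentiating a polynomial witnessing this, together with $\partial(F)\subseteq F':=K_j(a^{(j+1)}_l:l\in S)$ and $a^{(j)}_i\in K_j\subseteq F'$, shows $a^{(j+1)}_i$ is algebraic over $F'$; hence $K_{j+1}$ is algebraic over $F'$, which gives both $e_{j+1}\le|S|=e_j$ and, when equality holds, the algebraic independence of $\{a^{(j+1)}_i:i\in S\}$ over $K_j$. Fixing $r\ge r_0$ and an $m$-element set $S$ that is a transcendence basis of $\bar a^{(r)}$ over $K_{r-1}$, iterating the lemma gives that $\{a^{(j)}_i:i\in S\}$ is a transcendence basis of $\bar a^{(j)}$ over $K_{j-1}$ for all $j\ge r$, and in particular $\{a^{(j)}_i:i\in S,\ j>r\}$ is algebraically independent over $K_r$.

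Next I would build the good pair. Put $\bar a_r=(\bar a,\bar a',\dots,\bar a^{(r)})$, a tuple of length $N=(r+1)n$ which, after a harmless permutation of its coordinates, we may assume begins with the entries $a^{(r)}_i$, $i\in S$. Let $V\subseteq\mathcal U^N$ be the locus of $\bar a_r$ over $k$ and $W\subseteq\mathcal U^{2N}$ the locus of $(\bar a_r,\bar a_r')$ over $k$. That $W\subseteq T_\partial(V)$ and that $W$ projects generically onto $V$ are automatic: for any tuple $\bar c$ one has $(\bar c,\partial\bar c)\in T_\partial(\mathrm{loc}(\bar c/k))$, and the first projection of $W$ contains the generic point $\bar a_r$ of $V$, so is dominant. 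For condition (iii), the fibre $W_{\bar a_r}$ is the locus of $\bar a_r'=(\bar a',\dots,\bar a^{(r+1)})$ over $K_r=k(\bar a_r)$; since $\bar a',\dots,\bar a^{(r)}\in K_r$, this fibre is the product of a single point with the locus $L$ of $\bar a^{(r+1)}$ over $K_r$, so it suffices that $L$ be an affine subspace. Now $\bar a^{(r+1)}$ lies in the fibre $T_\partial(V^{(r)})_{\bar a^{(r)}}$ of the prolongation of $V^{(r)}:=\mathrm{loc}(\bar a^{(r)}/K_{r-1})$; in characteristic $0$ this fibre is an affine subspace defined over $K_r$ of dimension $\dim V^{(r)}=e_r=m$ (the generic point $\bar a^{(r)}$ of $V^{(r)}$ is smooth), and since $\mathrm{tr.deg}(\bar a^{(r+1)}/K_r)=e_{r+1}=m$ as well, $\bar a^{(r+1)}$ is a generic point of this affine subspace, whence $L$ equals it. So $(V,W)$ is a good pair.

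It then remains to identify $\mathrm{tp}(\bar a_r/k)$ with the $\partial$-generic type of $(V,W)$. By the first step, $\{a^{(r+1)}_i:i\in S\}$ is a transcendence basis of $\bar a^{(r+1)}$, hence of $\bar a_r'$, over $K_r$, so the integer $m$ attached to $(V,W)$ in Remark 2.2 is our $m$; and with the chosen ordering of coordinates, condition (ii) of Lemma 2.3 for $(V,W)$ is exactly the assertion that $\{a^{(j)}_i:i\in S,\ j>r\}$ is algebraically independent over $K_r$, which was proved in the first step, while condition (i), that $(\bar a_r,\bar a_r')$ be an $\mathrm{ACF}$-generic point of $W$, holds by the definition of $W$. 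Since by Lemma 2.3 conditions (i) and (ii) axiomatize a complete type over $k$, namely the $\partial$-generic type of $(V,W)$, we conclude that $p^{(r)}=\mathrm{tp}(\bar a_r/k)$ is this type.

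The real content, and the step I expect to be the main obstacle, is the first paragraph, and specifically its refined form: that one fixed set $S$ of $m$ coordinates serves as an algebraically independent core of $\bar a^{(j)}$ over $K_{j-1}$ for all sufficiently large $j$. This is precisely what is needed both to match condition (ii) of Lemma 2.3, which speaks of derivatives of a fixed block of variables, and to force the affine-subspace condition (iii); once it is in hand, the rest is routine bookkeeping with loci and prolongations.
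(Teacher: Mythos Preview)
Your proof is correct and follows essentially the same route as the paper's: choose $r$ at which the transcendence degrees $e_j$ stabilize, take $V$ and $W$ to be the loci of $\bar a_r$ and $(\bar a_r,\bar a_r')$, and verify the good-pair conditions. The differences are cosmetic: the paper obtains the fixed coordinate set $S$ by invoking a differential transcendence basis of $\bar a$ over $k$ rather than your direct stabilization lemma, and it checks condition (iii) by explicitly differentiating the minimal polynomial of each $a_i^{(r)}$ ($i\notin S$) to produce a linear relation, whereas you package the same computation as a dimension comparison inside $T_\partial(V^{(r)})_{\bar a^{(r)}}$; you are also more explicit than the paper in verifying that $\mathrm{tp}(\bar a_r/k)$ actually satisfies clauses (i) and (ii) of Lemma~2.3.
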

\begin{proof} Let $r\geq 0$ be such that $tr.deg({\bar a}^{(r)}/k({\bar a},...,{\bar a}^{(r-1)}) = 
\newline
tr.deg({\bar a}^{(i)}/k({\bar a},...,{\bar a}^{(i-1)})$ for all $i\geq r$. $r$ exists because the integers $d_{n} = tr.deg({\bar a}^{(n)}/k({\bar a},....,{\bar a}^{(n-1)})$ are decreasing and bounded above by the length of the tuple ${\bar a}$. 
Equivalently choose a differential transcendence basis $a_{1},..,a_{d}$ over $k$ for the tuple ${\bar a}$. Then choose $r$ such that ${\bar a}^{(r)}$ is in the algebraic closure of $k({\bar a},...,{\bar a}^{(r-1)}, a_{1}^{(r)},..,a_{d}^{(r)})$.  Let us now rebaptize $({\bar a},...,{\bar a}^{(r)})$ as ${\bar b}$. Let $V$ be the $k$-irreducible variety over $k$, of which ${\bar b}$ is the $k$-generic point. And let $W$ be the same thing for  $({\bar b}, {\bar b}')$. Then $W$ projects dominantly to $V$, and $W$ is a subvariety of $T_{\partial}(V)$, as is easilly checked.  Note that $W_{\bar b}$ is a generic fibre of $W\to V$ and ${\bar b'}$ is a generic point of that fibre.
Now every coordinate of the tuple ${\bar b}'$ is either already in ${\bar b}$ or is among $a_{1}^{(r+1)},...,a_{n}^{(r+1)}$  (where ${\bar a} = (a_{1},..,a_{n})$).  Let $d$ be the integer mentioned  above.  Let ${\bar b}_{0}$ be  
$({\bar a}, {\bar a}',...,{\bar a}^{(r-1)})$. 

Fix $i> d$, with $i\leq n$.  So $a_{i}^{(r)}$ is in the algebraic closure of $k({\bar b}_{0}, a_{1}^{(r)},..,a_{d}^{(r)})$, and let this be witnessed by an irreducible polynomial $P(x_{1},..,x_{d},x)$ over $k({\bar b}_{0})$  (bearing in mind that $a_{1}^{(r)},..,a_{d}^{(r)}$ is algebraically independent over $k({\bar b}_{0})$).  Then applying $\partial$ to $P(a_{1}^{(r)},..,a_{d}^{(r)},a_{i}^{(r)}) = 0$,  we see that 
\newline
$\sum_{j=1,..,d}(\partial P/\partial x_{j})(a_{1}^{(r)},..,a_{d}^{(r)},a_{i}^{(r)})a_{j}^{(r+1)}  +  (\partial P/\partial x) (a_{1}^{(r)},..,a_{d}^{(r)},a_{i}^{(r)})a_{i}^{(r+1)} + 
\newline
P^{\partial}(a_{1}^{(r)},..,a_{d}^{(r)},a_{i}^{(r)}) = 0$. 
\newline
Hence $(a_{1}^{(r+1)}, ..,a_{d}^{(r+1)}, a_{i}^{(r+1)})$ satisfies a nontrivial linear equation over $k({\bar b})$. As $a_{1}^{(r+1)},..,a_{d}^{(r+1)}$ are algebraically independent over $k({\bar b})$, it follows that the fibre $W_{\bar b}$ is an affine subspace of $T_{\partial}(V)$, as required.
\end{proof}

Finally we discuss the effective content of the above.  In fact it is a little bit more: complete types are determined by certain distinguished formulas. Moreover there is an effective list of the distinguished formulas, and an effective means of passing from the formula to the associated complete type. 

Now assuming that $k$ is countable and that $DCF_{0}$ with constants for elements of $k$ is decidable, then it follows from Lemmas 2.3 and 2.4 that for each $n$ the  collection of complete types over $k$ in finitely many variables, is effectively listable, namely there is a list $p_{1}, p_{2},.....$ of complete $n$-types over $k$ such that $\{(i,j)$: the $jth$ formula is in $p_{i}\}$ is recursive.  In particular taking $k = \Q$ we have an effective listing of complete $n$-types of $DCF_{0}$.

More precisely, fixing $n$, we consider good pairs $(V,W)$ over $k$ such that for some $r$, $V$ is a subset of affine $nr$ space, and so $W$ is a subset of affine $2nr$-space with coordinates say $({\bar x}_{1},..., {\bar x}_{k}, {\bar u}_{1},..,{\bar u}_{k})$ (where the ${\bar x}_{i}$ and ${\bar u}_{i}$ are $n$-tuples), where we also impose (on $W$)  that ${\bar u}_{1} = {\bar x}_{2}$,...,
${\bar u}_{r-1} = {\bar x}_{r}$.  The collection of such good pairs over $k$ is recursive. By Lemma 2.3 the $\partial$-type of such a good pair $(V,W)$, now considered as a complete $n$-type (rather than $nr$-type) is recursive uniformly in the good pair. And by Lemma 2.4 every complete $n$-type over $k$ arises this way.

\vspace{5mm}
\noindent
One could also ask about an analogous effective listing of complete quantifier-free types in $ACFA$ (the model companion of the the theory difference fields), fixing the characteristic if one wants.  In fact, as pointed out to us by Michael Wibmer,  Cohn's theory of difference kernels \cite{Cohn} is the analogue of the theory of good pairs that we have discussed. There is again a notion of ``generic prolongation" ($\sigma$-generic quantifier-free type) attached to a difference kernel, but now there are maybe more than one, although only finitely many such $\sigma$-generic types.  This is enough to give an effective enumeration of the quantifier-free types in the style of the current paper.  On the other hand,  As Dave Marker pointed out to us, the finite injury argument for $DCF_{0}$ based on the ACC for radical differential ideals, adapts to the difference context, via the ACC on perfect difference ideals. 

Let us finally remark that in the differential case, the effective enumeration of complete types is the same thing as an effective enumeration of prime differential ideals, and there are various treatments of this in the literature, such as computing the prime decomposition of a radical differential ideal (see \cite{Golubitsky}).

\end{document}